\theoremstyle{plain}
\newtheorem{thm}{Theorem}[section]
\newtheorem{theorem}[thm]{Theorem}
\newtheorem{lemma}[thm]{Lemma}
\newtheorem{proposition}[thm]{Proposition}
\newtheorem{corollary}[thm]{Corollary}
\newtheorem{conjecture}[thm]{Conjecture}
\theoremstyle{definition}
\newtheorem{definition}[thm]{Definition}
\newtheorem{remark}[thm]{Remark}
\newtheorem{example}[thm]{Example}
\newcommand\beginproof[1]{\trivlist\item[\hskip\labelsep{\em #1.}]}
\newcommand\proofof[1]{\beginproof{Proof of #1}}
\def\endproof{\hspace*{\fill}\endproofsymbol\endtrivlist}
\def\endproofsymbol{\frame{\rule[0pt]{0pt}{6pt}\rule[0pt]{6pt}{0pt}}}
\newtheorem{thevarthm}[thm]{\varthmname}
\newenvironment{varthm*}[1]{\trivlist\item[]{\bf #1.}\it}{\endtrivlist}
\renewcommand\geq{\geqslant}
\renewcommand\leq{\leqslant}
\newcommand\be{\begin{eqnarray*}}
\newcommand\ee{\end{eqnarray*}}
\newcommand\K{\mathbb K}
\newcommand\C{\mathbb C}
\newcommand\Z{\mathbb Z}
\newcommand\F{\mathbb F}
\renewcommand\P{\mathbb P}
\newcommand\call{{\mathcal L}}
\newcommand\calp{{\mathcal P}}
\newcommand\calt{{\mathcal T}}
\newcommand\newop[2]{\def#1{\mathop{\rm #2}\nolimits}}
\newop\log{log}
\newop\ord{ord}
\newop\Gal{Gal}
\newop\SL{SL}
\newop\GL{GL}
\newop\Bl{Bl}
\newop\mult{mult}
\newop\mass{mass}
\newop\div{div}
\newop\codim{codim}
\newop\sing{sing}
\newop\vdim{vdim}
\newop\edim{edim}
\newop\Ass{Ass}
\newop\size{size}
\newop\reg{reg}
\newop\areg{areg}
\newop\asreg{asreg}
\newop\satdeg{satdeg}
\newop\supp{supp}
\newop\gin{gin}
\newop\ini{in}
\newop\vol{vol}
\newop\sat{sat}
\newop\length{length}
\newop\depth{depth}
\newop\characteristic{char}
\def\keywordname{{\bfseries Keywords}}%
\def\keywords#1{\par\addvspace\medskipamount{\rightskip=0pt plus1cm
\def\and{\ifhmode\unskip\nobreak\fi\ $\cdot$
}\noindent\keywordname\enspace\ignorespaces#1\par}}
\def\subclassname{{\bfseries Mathematics Subject Classification
(2000)}\enspace}
\def\subclass#1{\par\addvspace\medskipamount{\rightskip=0pt plus1cm
\def\and{\ifhmode\unskip\nobreak\fi\ $\cdot$
}\noindent\subclassname\ignorespaces#1\par}}
\definecolor{qqqqff}{rgb}{0,0,0}
\definecolor{uuuuuu}{rgb}{0,0,0}
\definecolor{zzttqq}{rgb}{0,0,0}
\definecolor{xdxdff}{rgb}{0,0,0}
\definecolor{ttqqqq}{rgb}{0.2,0.,0.}
\definecolor{uuuuuu}{rgb}{0.26666666666666666,0.26666666666666666,0.26666666666666666}
\definecolor{xdxdff}{rgb}{0.49019607843137253,0.49019607843137253,1.}
\definecolor{wwccqq}{rgb}{0.4,0.8,0.}
\definecolor{qqqqcc}{rgb}{0.,0.,0.8}
\definecolor{ffttww}{rgb}{1.,0.2,0.4}
\begin{document}

\author{M.~Dumnicki, D.~Harrer, J.~Szpond\footnote{The first and the third authors were partially supported by National Science Centre, Poland, grant 2014/15/B/ST1/02197}}
\title{On absolute linear Harbourne constants}
\date{\today}
\maketitle
\thispagestyle{empty}

\begin{abstract}
In the present note we study absolute linear Harbourne constants. These are invariants which were introduced in \cite{IMRN}
in order to relate the lower bounds on the selfintersection of negative curves on birationally equivalent surfaces to the complexity
of the birational map between them. We provide various lower and upper bounds on Harbourne constants and give their values
for the number of lines $s$ of the form $p^{2r}+p^r+1$ for any prime number $p$ and also for all values of $s$ up to $31$. This
extends considerably results of the third author obtained earlier in  \cite{Szp2015}.

\keywords{arrangements of lines, combinatorial arrangements, Harbourne constants, finite projective plane, bounded negativity
conjecture}
\subclass{14C20; 52C30; 05B30}
\end{abstract}

%*****************************************************************************

\section{Introduction}
\label{intro} Arrangements of lines were  introduced to algebraic
geometry by Hirzebruch in his papers concerning the geography of
surfaces (i.e. construction of surfaces $X$ with prefixed invariants
$c_1^2(X)$ and $c_2(X)$), see \cite{Hir83}, \cite{BHH87}.

Multiplier ideals defined by arrangements of lines were studied by Teitler \cite{Tei07} and Musta\c t\u a \cite{Mus2006}.

Recently arrangements of lines appeared in the ideas revolving
around the Boun\-ded Negativity Conjecture (BNC for short), see
\cite{BNC} for the background of the Conjecture and \cite{IMRN},
\cite{Pok2015} for the role of configurations of lines. Whereas BNC
is relevant only over a field of characteristic zero, some related
problems are of interest over arbitrary fields. In \cite{IMRN} the
authors introduced and began to study linear Harbourne constants.
These are certain invariants computed by configurations of lines in
the projective plane. Even though the Bounded Negativity fails in
positive characteristic, it is clear from Definition \ref{de:
Harbourne} that for a fixed $d$, the linear Harbourne constant
$H(d)$ is a finite number (because the number of combinatorial possibilities
for invariants of a configuration of $d$ lines is finite). It is
interesting to estimate these numbers because in particular they
measure the discrepancy between combinatorial data sets, see
\cite{BruRys49} and those sets which come from geometric
configurations defined over some fields.

For the purpose of this note, a configuration $\call$ is a finite
set of mutually distinct lines $\call=\{L_1,\ldots, L_d\}$. Given a
configuration $\call$, we define its singular set
$\calp(\call)=\{P_1,\ldots,P_s\}$ as a set of points where two or
more lines intersect. This is the same as the singular locus of the
divisor $L_1+\ldots +L_d$. For a point $P\in \calp (\call)$, we
denote by $m_{\call}(P)$ its \emph{multiplicity}, i.e. the number of
lines which pass through $P$.  We have the following definitions.
\begin{definition}\label{de: Harbourne}
The \emph{linear Harbourne constant of a configuration of lines
$\call$ in the projective plane $\P^2(\K)$}  is the rational number
  \begin{equation}\label{l-H-constant}
  H(\K,\call)=\frac{d^2-\sum_{k=1}^s m_{\call}(P_k)^2}{s}.
  \end{equation}
  The \emph{linear Harbourne constant of $d$ lines over $\K$} is defined as the minimum
  \[
  H(\K,d):=\min H(\K,\call)
  \]
  taken over all configurations $\call$ of $d$ lines.\\
  Finally the \emph{absolute linear Harbourne constant of $d$ lines} is the minimum
  \[
  H(d):=\min_{\K} H(\K,d)
  \]
  taken over all fields $\K$.
\end{definition}
%In \cite{Szp2015} absolute linear Harbourne constants of up to $10$ lines have been computed. Investigations run since the
%appearance of \cite{Szp2015}, partly experiments supported by the Singular \cite{DGPS}, have led to the conjectural picture
%presented below.

In order to alleviate the notation we define first the set
\[
Q=\{ q=p^r, \quad {\text{$p$ is prime}}, \quad r\in \Z_{>0}\}.
\]
For an integer $d$, we define $q(d)$ as the least number $q\in Q$ satisfying
\[
d\leq q^2+q+1
\]
and $r(d)$ as the largest number $r\in Q$ satisfying
\[
r^2+r+1\leq d.
\]
Systematic investigation of absolute linear Harbourne constants
$H(d)$ was initiated in \cite{Szp2015}. Results stated there and
computer supported experiments have led us to formulate the
following conjecture.
\begin{conjecture}\label{con: Hconstants}
For $d\geq 2$ let $q=q(d)$ and let $i:=q^2+q+1-d$.

If $i\leq 2q-2$, then
%$(q(d)-1)^2+q(d)<d$
\[
H(d)=h(d)
\]
where
$$h(d)=\frac{q^2+q+1-i-\varepsilon_1(i)m_1(i)-\varepsilon_2(i)m_2(i)-t_{q-1}(i)(q-1)-t_q(i)
q-t_{q+1}(i)(q+1)}{\varepsilon_1(i)+\varepsilon_2(i)+t_{q-1}(i)+t_q(i)+t_{q+1}(i)},$$
with
$$m_1(i)=q+1-i, \quad \quad m_2(i)=2q+1-i$$
   \begin{equation*}
   \varepsilon_1(i)=
   \left\{\begin{array}{lcc}
   1 &   & \text{for $0\leq i\leq q-1$}\\
   0 &   & \text{otherwise}
   \end{array}\right.,   \quad \quad
      \varepsilon_2(i)=
   \left\{\begin{array}{lcc}
   1 &   & \text{for $i> q+1$}\\
   0 &   & \text{otherwise}
   \end{array}\right.,
   \end{equation*}

      \begin{equation*}
   t_{q-1}(i)=
   \left\{\begin{array}{lcc}
   qi-q^2-q &   & \text{for $i> q+1$}\\
   0 &   & \text{otherwise}
   \end{array}\right.,
   \end{equation*}
   \begin{equation*}
   t_q(i)=
   \left\{\begin{array}{lcc}
   qi &   & \text{for $i\leq q+1$}\\
   2q^2-(i-2)q-1 &   & \text{for $i>q+1$}
   \end{array}\right.,
   \end{equation*}
      \begin{equation*}
   t_{q+1}(i)=
   \left\{\begin{array}{lcc}
   q^2+q-iq &   & \text{for $i\leq q+1$}\\
   0 &   & \text{otherwise}
   \end{array}\right..
   \end{equation*}
Moreover for $i=2q-1$ we have
\[
H(d)=-\frac{q^3-q^2+2q-2}{q^2+q-1}.
\]
\end{conjecture}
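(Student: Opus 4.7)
The statement gives case-by-case formulas for $H(d)$, and splits naturally into an upper bound $H(d)\le h(d)$ (respectively $H(d)\le -\frac{q^3-q^2+2q-2}{q^2+q-1}$ when $i=2q-1$) proved by explicit construction, and a matching lower bound valid over every field $\K$, which is the hard half. A useful preliminary identity is $\sum_k m_\call(P_k)(m_\call(P_k)-1)=d(d-1)$, obtained by counting pairs of lines in two ways, which rewrites $H(\K,\call)=(d-\sum_k m_\call(P_k))/s$; minimising $H$ is therefore equivalent to maximising the total multiplicity $\sum_k m_\call(P_k)$ for a given $s$.

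For the upper bound I would take $q=q(d)$, realise the dual configuration of all $q^2+q+1$ lines of $\P^2(\F_q)$ (every singular point is a point of $\P^2(\F_q)$ of multiplicity $q+1$), and then delete $i$ lines in a pattern depending on the range of $i$. When $0\le i\le q+1$ the deleted lines are chosen concurrent through a point $P_0$: then $P_0$ becomes a point of mult $q+1-i$, the $iq$ other points on those lines drop from mult $q+1$ to mult $q$, and the remaining $q^2+q-iq$ points keep mult $q+1$, reproducing $(\varepsilon_1,t_q,t_{q+1})$ as in the formula. When $q+1<i\le 2q-2$ one deletes all $q+1$ lines through $P_0$ plus $i-q-1$ additional lines through a second point $P_0'$; then $P_0'$ has mult $2q+1-i=m_2$, and the rest of the incidence count produces $(\varepsilon_2,t_{q-1},t_q)$. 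For $i=2q-1$ I would use the refined construction that removes the line $P_1P_2$ together with $q-1$ further lines through each of $P_1$ and $P_2$, so that $P_1$ and $P_2$ become non-singular, and a direct count then yields the exceptional value with denominator $q^2+q-1$.

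For the lower bound I would work with the combinatorial vector $(t_m)_{m\ge 2}$ of an arbitrary configuration. Besides the pair-counting identity $\sum_m m(m-1)t_m=d(d-1)$, the plan is to exploit the per-line budget $\sum_{P\in L\cap\calp(\call)}(m_\call(P)-1)=d-1$ valid for every line $L$, together with the incidence bound that a point of multiplicity $m$ meets $m$ distinct lines. These together should force the maximum multiplicity $m_{\max}$ in any configuration that beats $h(d)$ to satisfy $m_{\max}\le q+1$. Once $m_{\max}$ is bounded, minimising $(d-\sum_m m\,t_m)/\sum_m t_m$ subject to $\sum_m m(m-1)t_m=d(d-1)$ and $2\le m\le q+1$ becomes a finite linear-programming problem, whose extremal vertex I expect to coincide with the multiplicity distribution of the upper-bound configuration; a case-by-case check matching the cases in $h(d)$ then completes the argument.

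The main obstacle is clearly the lower bound. The pair identity alone admits combinatorial extreme points (for instance a single point of multiplicity $\sim d/2$, giving $H\sim -d/4$) which are far more negative than $h(d)\sim -\sqrt{d}$; these are not realisable as line arrangements, but ruling them out requires genuine geometric input, and the Hirzebruch-type inequalities that usually provide this are unavailable outside characteristic zero. The crux is therefore to produce a field-independent bound on the multiplicity spectrum that is strong enough to reduce the problem to the finite LP above. The exceptional formula for $i=2q-1$, whose denominator $q^2+q-1$ does not match what the main formula would predict, signals that a second extremal family appears there, and proving its sharpness will require a separate matching lower bound tailored to that refined construction.
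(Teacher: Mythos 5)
Be careful about the status of what you are proving: in the paper this statement is a \emph{conjecture}, not a theorem. The paper itself only establishes (i) the upper bound $H(d)\le h(d)$ for $(q-1)^2+(q-1)+1<d\le q^2+q+1$ by exactly the construction you describe (deleting $i$ concurrent lines from the full line arrangement of $\P^2(\F_q)$, resp.\ the two-pencil deletion for $q+1<i\le 2q-2$ and the refined deletion for $i=2q-1$), (ii) the exact value $H(d)=-q$ when $d=q^2+q+1$, via Jensen's inequality applied to $\sum_k m_\call(P_k)(m_\call(P_k)-1)=d(d-1)$ together with the de Bruijn--Erd\H{o}s bound $s\ge d$, and (iii) the matching lower bound $H(d)\ge h(d)$ only for $d\le 31$, by a computer-assisted case analysis. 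So your upper-bound half coincides with the paper's Proposition on the construction, but your lower-bound half claims more than anyone (including the authors) can currently deliver.

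The concrete gap is in the reduction you propose for the lower bound. First, the assertion that the per-line identity $\sum_{P\in L}(m_\call(P)-1)=d-1$ plus elementary incidence counts ``force $m_{\max}\le q+1$ for any configuration beating $h(d)$'' is not proved and is not a routine consequence of those constraints; nothing you write rules out, say, a few points of multiplicity between $q+2$ and roughly $\sqrt{d}$ compensated by many low-multiplicity points. Second, even granting $m_{\max}\le q+1$, the linear program over the vectors $(t_2,\dots,t_{q+1})$ subject only to $\sum_m m(m-1)t_m=d(d-1)$ has extreme points strictly more negative than $h(d)$ that do not come from line arrangements over any field (combinatorial data imitating projective planes of non-prime-power order, or other non-realisable designs). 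Excluding these requires genuine realisability input, and this is precisely where the paper has to work: for each $d\le 31$ it enumerates all solutions $T$ with $q(T)<h(d)$ and kills them one by one using the two-pencils criterion and an integer program built from the line-type system \eqref{eq: solver} together with the pencil inequalities \eqref{eqonep}. That this exclusion can fail in general is illustrated by the paper's own remark about $d=43$, where the combinatorial data of a hypothetical projective plane of order $6$ cannot be eliminated by these methods and one must invoke Bruck--Ryser; no field-independent argument of the LP type you sketch is known to close this. So your proposal correctly reproduces the known upper bound, but the lower bound remains a plan with an acknowledged (and essential) hole, which is exactly why the statement is a conjecture verified in the paper only for $d\le 31$.
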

\begin{remark}
We do not know what happens for $d$ such that $d\leq (q(d)-1)^2+q(d)$. The first such $d$ is $d=32$ with $q(32)=7$. See the end of the last section.
\end{remark}
This conjecture has been verified for $d\leq 10$ in \cite{Szp2015}. In the present paper, we extend the range of the validity of the Conjecture to $d\leq 31$. This is our first main result. We repeat the results from \cite{Szp2015} for completeness.
\begin{theorem}[Values of absolute linear Harbourne constants]\label{thm values}
For $2\geq d\geq 31$ we have

   \begin{table}[H]
   \centering
   \renewcommand{\arraystretch}{1.2}
\begin{minipage}{.32\textwidth}
        \centering   
\begin{tabular}[t]{|c|c|}
  \hline
     $d$ & $H(d)$  \\
  \hline
     $2$ & $0$ \\
  \hline
     $3$ & $-1$\\
  \hline
     $4$ & $-4/3\;\approx\; -1.333$ \\
  \hline
     $5$ & $-3/2\;=\;-1.5$ \\
  \hline
     $6$ & $-12/7\;\approx\; -1.714$ \\
  \hline
     $7$ & $-2$\\
  \hline
  \hline
     $8$ & $-2$\\
  \hline
     $9$ & $-9/4\;=\;-2.25$ \\
  \hline
     $10$ & $-29/12\;\approx\; -2.416$ \\
  \hline
     $11$ & $-33/13\;\approx\; -2.538$ \\
  \hline
     $12$ & $-36/13\;\approx\; -2.769$ \\
  \hline
     $13$ & $-3$ \\
  \hline
  \hline
\end{tabular}
\end{minipage}
\begin{minipage}{.32\textwidth}
        \centering
\begin{tabular}[t]{|c|c|}
  \hline
     $d$ & $H(d)$  \\
  \hline
     $14$ & $-54/19\;\approx\; -2.842$ \\
  \hline
     $15$ & $-3$ \\
  \hline
     $16$ & $-16/5\;=\;-3.2$ \\
  \hline
     $17$ & $-67/20\;=\;-3.35$ \\
  \hline
     $18$ & $-24/7\;\approx\; -3.428$ \\
  \hline
     $19$ & $-76/21\;\approx\; -3.619$\\
  \hline
     $20$ & $-80/21\;\approx\; -3.809$ \\
  \hline
     $21$ & $-4$ \\
  \hline
  \hline
\end{tabular}
\end{minipage}
\begin{minipage}{.32\textwidth}
        \centering
\begin{tabular}[t]{|c|c|}
  \hline
     $d$ & $H(d)$ \\
  \hline
     $22$ & $-108/29\;\approx\; -3.724$ \\
  \hline
     $23$ & $-115/30\;\approx\; -3.833$\\
  \hline
     $24$ & $-4$ \\
  \hline
     $25$ & $-125/30\;\approx\; -4.166$ \\
  \hline
     $26$ & $-129/30\;=\;4.3$ \\
  \hline
     $27$ & $-135/31\;\approx\; -4.354$\\
  \hline
     $28$ & $-140/31\;\approx\; -4.516$ \\
  \hline
     $29$ & $-145/31\;\approx\; -4.677$ \\
  \hline
     $30$ & $-150/31\;\approx\; -4.838$ \\
  \hline
     $31$ & $-5$ \\
  \hline
  \hline
\end{tabular}
\end{minipage}
   \caption{$ $  Values of $H(d)$ for up to $31$ lines}
   \label{tab: values}
   \end{table}
\end{theorem}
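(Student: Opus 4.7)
The plan is to prove the upper bound $H(d)\le h(d)$ and the matching lower bound $H(d)\ge h(d)$ for each $d$ in the range $2\le d\le 31$, where $h(d)$ denotes the value listed in the table.

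\textbf{Upper bound via explicit configurations.} For each $d$ I would exhibit a concrete configuration of $d$ lines realising the claimed value. The principal building blocks are the finite projective planes: for $q\in Q$ the plane $\mathrm{PG}(2,\F_q)$ has $q^2+q+1$ lines meeting in $q^2+q+1$ points, each of multiplicity $q+1$, so substituting into \eqnref{l-H-constant} yields $H=-q$. This immediately settles $d=7,13,21,31$. For the remaining $d$ one takes a sub-configuration of the projective plane with $q(d)^2+q(d)+1$ lines, removing $i=q^2+q+1-d$ lines in such a way that the multiplicities of the remaining singular points match the counts $\varepsilon_1,\varepsilon_2,t_{q-1},t_q,t_{q+1}$ appearing in $h(d)$. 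A direct substitution then shows that the constructed configuration has Harbourne constant exactly $h(d)$.

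\textbf{Lower bound via a combinatorial linear program.} Let $t_k$ denote the number of singular points of $\call$ of multiplicity exactly $k$, so that $s=\sum_{k\ge 2}t_k$. Since every pair of distinct lines in $\P^2(\K)$ meets in a unique singular point,
\[
\sum_{k\ge 2}\binom{k}{2}t_k=\binom{d}{2}.
\]
A short manipulation reduces the Harbourne constant to the convenient form
\[
H(\K,\call)=\frac{d-\sum_{k\ge 2}k\,t_k}{\sum_{k\ge 2}t_k}.
\]
The lower-bound problem therefore becomes an integer linear program in the variables $(t_k)_{k\ge 2}$: show that no admissible tuple makes the right-hand side smaller than $h(d)$. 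For each $d\le 31$ the plan is to enumerate the finitely many candidate tuples with $H<h(d)$ and rule each of them out, either by a direct combinatorial obstruction (for instance the bound $k\cdot t_k\le d\lfloor (d-1)/(k-1)\rfloor$ coming from the fact that on a single line there are at most $\lfloor (d-1)/(k-1)\rfloor$ points of multiplicity $\ge k$) or by an incidence-geometric argument that works over every field.

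\textbf{Main obstacle.} The delicate part is the lower bound for those values of $d$ whose extremal configuration is a non-symmetric sub-configuration of a projective plane, which is essentially every $d\in\{11,12,14,17,18,19,20,22,23,25,26,27,28,29,30\}$. For these cases the combinatorial LP admits tuples $(t_k)$ producing a more negative value of $H$ than $h(d)$, which must be excluded on geometric grounds: such multiplicity vectors simply do not correspond to any configuration of lines, even in positive characteristic. The plan is to carry out the enumeration and the obstruction check by computer, reserving a hand argument for the small number of borderline tuples that lie closest to realisability. Matching the upper and lower bounds then yields the exact values tabulated in Theorem \ref{thm values}.
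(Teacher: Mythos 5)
Your strategy is the same as the paper's: the upper bound comes from deleting $i=q^2+q+1-d$ lines, chosen from one or two pencils, out of the full line configuration of $\P^2(\F_q)$ (this is exactly Proposition \ref{prop: construction}; note that for $i=2q-1$, i.e.\ $d=22$, the generic recipe must be modified by also deleting the line joining the two pencil centres, and the resulting value is the separate expression $-\frac{q^3-q^2+2q-2}{q^2+q-1}=-\frac{108}{29}$ rather than the generic $h(d)$ formula your sketch refers to), while the lower bound is a finite, computer-assisted enumeration of all tuples $(t_2,\dots,t_d)$ satisfying \eqref{eq: combinatorial} whose quotient \eqref{eqqT} is below the target value, each of which must be shown not to arise from a line configuration over any field.

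The genuine gap is in that exclusion step, which is where essentially all the content of the theorem lies. The only concrete criterion you state, $k\,t_k\le d\lfloor(d-1)/(k-1)\rfloor$, is much too weak. A concrete failure: for $d=14$ the tuple $t_3=7$, $t_4=10$, $t_5=1$ satisfies \eqref{eq: combinatorial}, has quotient $-26/9\approx-2.889$, which is below the tabulated $H(14)=-54/19\approx-2.842$, and it passes your inequality for every $k$ ($21\le 84$, $40\le 56$, $5\le 42$); it also produces no contradiction from the two pencils criterion of Lemma \ref{le: TP criterion}, since $m_1m_2+2=22>s=18$ but $(m_1-1)(m_2-1)+a=12+a\le 18$ for the combinatorially forced value $a=4$. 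The paper eliminates such tuples by a finer double count: attach to each line its type vector $\nu(L)$, set up the linear system \eqref{eq: solver} in the unknowns $n_\nu(\call)$, and, crucially, add the pencil inequalities \eqref{eqonep} available whenever some $t_m=1$; the resulting integer program is infeasible precisely for this tuple (Example \ref{ex: d14}), and this solver criterion together with the two pencils test is what the {\sc Singular} script verifies for every bad tuple up to $d=31$. In your proposal this mechanism is replaced by an unspecified ``incidence-geometric argument that works over every field'' and a promised ``hand argument for borderline tuples,'' so the lower bound --- and hence the theorem --- is not actually established as written; to close the gap you would need to formulate and verify exclusion criteria of at least the strength of Lemma \ref{le: TP criterion} plus the system \eqref{eq: solver}--\eqref{eqonep}.
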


\begin{remark}
It comes as a surprise that the function $H(d)$is not decreasing
with $d$ increasing.
\end{remark}

We prove the Conjecture for any $d=q(d)^2+q(d)+1$, see Corollary \ref{co: q}.

The last assertion is a consequence of the following more general result.
\begin{theorem}[Lower bound on Harbourne constants]\label{thm: lower bnd}
For $d\geq 6$ we have
\[
H(d)\geq -\frac 1 2 \sqrt{4d-3}+\frac 1 2.
\]
For $d=q^2+q+1$ with $q\in Q$ we have the equality. In this case $H(d)=-q$ is computed by
the configuration consisting of all lines in the finite projective plane $\P^2(\F_q)$.
\end{theorem}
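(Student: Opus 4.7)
The plan is to reduce the inequality to a one-variable estimate in the number $s$ of singular points and then to invoke the de Bruijn--Erd\H{o}s theorem in order to exclude a troublesome middle range of $s$. Set $T:=\sum_{k=1}^s m_{\call}(P_k)$ and $Q:=\sum_{k=1}^s m_{\call}(P_k)^2$. Because any two distinct lines in $\P^2(\K)$ meet in exactly one point, double-counting pairs of lines yields $\sum_k \binom{m_{\call}(P_k)}{2}=\binom{d}{2}$, equivalently $Q-T=d(d-1)$. Hence
\[
H(\K,\call) \;=\; \frac{d^2-Q}{s} \;=\; \frac{d-T}{s}.
\]
A single application of Cauchy--Schwarz, $T^2\le sQ=s(T+d(d-1))$, gives $T\le \tfrac{1}{2}\bigl(s+\sqrt{s^2+4sd(d-1)}\bigr)$, and therefore
\[
H(\K,\call) \;\ge\; f(s) \;:=\; \frac{d}{s}-\frac{1}{2}-\frac{1}{2}\sqrt{1+\frac{4d(d-1)}{s}}.
\]

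Next I would study $f$ as a function of a real $s>0$. A routine differentiation shows that $f$ has a unique critical point at $s^{*}=\frac{4(d-1)}{d-2}$, being strictly decreasing on $(0,s^{*}]$ and strictly increasing on $[s^{*},\infty)$. The target of the theorem equals $f(d)=\frac{1-\sqrt{4d-3}}{2}$, so what I want is $f(s)\ge f(d)$, and this holds as soon as $s\ge d$: indeed, $s^{*}\le d$ is equivalent to $d^2-6d+4\ge 0$, which is true for every integer $d\ge 6$. The main obstacle is the range $1<s<d$, in which $f(s)<f(d)$, so Cauchy--Schwarz alone is not enough. To close this gap I invoke the projective version of the de Bruijn--Erd\H{o}s theorem: $d$ distinct lines in $\P^2(\K)$ are either concurrent (a pencil, so $s=1$) or meet in at least $d$ distinct singular points. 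This follows from the standard rank argument applied to the line--point incidence matrix $M\in\{0,1\}^{d\times s}$: since two distinct lines always meet in a singular point, $MM^{T}=J+\mathrm{diag}(\sigma_L-1)$, where $\sigma_L$ is the number of singular points on $L$; whenever the configuration is not a pencil one has $\sigma_L\ge 2$ for every line, $MM^{T}$ is positive definite, and thus $d\le s$.

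The argument then splits in two cases. In the pencil case, $T=d$ and $H(\K,\call)=0\ge\frac{1-\sqrt{4d-3}}{2}$. In the case $s\ge d$, the monotonicity of $f$ on $[d,\infty)$ gives $H(\K,\call)\ge f(s)\ge f(d)=\frac{1-\sqrt{4d-3}}{2}$. This proves the lower bound. For the equality assertion, when $d=q^2+q+1$ with $q\in Q$ I would exhibit the configuration $\call$ consisting of all lines of $\P^2(\F_q)$: then $s=d=q^2+q+1$, every $m_k=q+1$, and $H(\F_q,\call)=\frac{d-d(q+1)}{d}=-q$, which is exactly $\frac{1-\sqrt{4d-3}}{2}=-q$; combined with the lower bound, this gives $H(d)=-q$.
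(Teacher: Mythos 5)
Your proposal is correct and follows essentially the same route as the paper: the same reduction $H(\K,\call)=(d-T)/s$, the same lower bound $f(d,s)$ (your Cauchy--Schwarz step is equivalent to the paper's Jensen inequality applied to $x(x-1)$), the same dichotomy via de Bruijn--Erd\H{o}s between the pencil case and $s\ge d$, and monotonicity of $f$ in $s$, with the equality case realized by all lines of $\P^2(\F_q)$. The only (welcome but inessential) additions are your explicit incidence-matrix proof of the de Bruijn--Erd\H{o}s bound, which the paper simply cites, and the explicit critical point $s^{*}=4(d-1)/(d-2)$ making the calculus step precise.
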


\begin{theorem}[Upper bound for Harbourne constants]\label{thm: upper bnd}
For $d\geq 7$ and with $r=r(d)$, we have
\[
H(d)\leq -2\frac{r^4+r^3-r-(d-1)^2}{r^4+2r^3-r-d^2+d-2}.
\]
\end{theorem}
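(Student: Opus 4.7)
The plan is to exhibit, for every $d \ge 7$, a single configuration $\call \subset \P^2(\K)$ whose Harbourne constant equals the right-hand side of the claimed inequality; since $H(d)$ is an infimum over fields and configurations, this suffices. Set $r = r(d)$, $N = r^2+r+1$ (so that $N \le d$ by definition of $r(d)$), and $k = d-N \ge 0$. The natural candidate is the union of the $N$ lines of the projective plane $\P^2(\F_r)$, embedded in $\P^2(\K)$ for a sufficiently large extension $\K \supseteq \F_r$ (for instance $\K = \overline{\F_r}$), together with $k$ additional lines chosen in general position.

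The combinatorial step is to verify that for a generic choice of added lines the only new incidences are ordinary double points. Concretely, one needs that no added line passes through any of the $N$ original singular points, that no three of the added lines are concurrent, and that no intersection of two added lines lies on any old line; these are generic conditions on the parameter space of $k$-tuples of lines in $\P^2(\K)$, so an admissible choice exists whenever $\K$ is infinite. Under these assumptions the singular set of $\call$ consists of the $N$ original points of multiplicity $r+1$, together with $kN$ double points from new-old intersections and $\binom{k}{2}$ double points from new-new intersections. Plugging these multiplicities into Definition~\ref{de: Harbourne} gives
\[
H(\K,\call) = \frac{(N+k)^2 - N(r+1)^2 - 4kN - 2k(k-1)}{N(k+1) + \binom{k}{2}}.
\]

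It remains to check, by a routine polynomial manipulation, that this rational function of $r$ and $d$ equals the claimed bound. Using $N-(r+1)^2 = -r$ and $k = d-N$, the numerator simplifies to $N^2-N(r+2)-(d-1)^2+1 = r^4+r^3-r-(d-1)^2$, and twice the denominator reduces to $d^2-d-N^2+3N = -(r^4+2r^3-r-d^2+d-2)$; the quotient is precisely the asserted right-hand side. The main obstacle is conceptual rather than computational: one has to guess the right configuration. Here the choice is forced by Theorem~\ref{thm: lower bnd}, which singles out the $\F_r$-projective plane as extremal for $d = r^2+r+1$, so for $d$ just above this value the natural attempt is to perturb the extremal configuration by adding as few lines as possible in general position; the computation confirms that this naive construction already yields the stated upper bound.
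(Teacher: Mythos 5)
Your proposal is correct and follows essentially the same route as the paper: take all $r^2+r+1$ lines of $\P^2(\F_{r})$ inside $\P^2(\overline{\F}_{r})$, add the remaining $d-(r^2+r+1)$ lines in general position so that all new intersection points are ordinary double points, and compute the resulting Harbourne constant. Your algebraic simplification to the stated bound checks out, and the explicit genericity conditions you list are simply a spelled-out version of the paper's ``general lines'' assumption.
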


   We will considerably improve this bound for some $d$ in Proposition \ref{prop: construction}.
   In order to prove Theorem \ref{thm values} we introduce some new tools, which might be of independent interest in other areas of combinatorics and geometry. 
   We discuss also how our problem is related to the classical geometric problem on the existence of projective planes with certain numbers of points. 
   Our investigations are accompanied by Singular \cite{DGPS} computations. The complete script of our program
   is provided in the Appendix.

\section{Initial data}

Let $\call=\{L_1,\ldots,L_d\}$ be a configuration of lines in the
projective plane $\P^2(\K)$. Let $t_k$ be the number of points where
exactly $k$ lines intersect. Then we have the following basic
combinatorial equality
\begin{equation}\label{eq: combinatorial}
\binom{d}{2}=\sum\limits_{k= 2}^{d}t_k\binom{k}{2}.
\end{equation}
Note that using this notation and taking into account \eqref{eq: combinatorial} we can simplify the way $H(\K,\call)$ is expressed:
\begin{equation}\label{eq: simply H}
H(\K,\call)=\frac{d^2-\sum_{k=2}^d t_kk^2}{s}=\frac{d-\sum_{k=1}^sm_{\call}(P_k)}{s}.
\end{equation}

Now our approach to computing or bounding Harbourne constants is
based on the following idea. For a fixed $d$ we consider the set
$\calt$ of all integral solutions $T=(t_2,t_3,\ldots,t_d)$ of the
equality \eqref{eq: combinatorial} and we compute the resulting
combinatorial quotient
\begin{equation}\label{eqqT}
   q(T)=\frac{d^2-\sum_{k=2}^dt_kk^2}{\sum_{k=2}^dt_k}.
\end{equation}
Of course not all elements of $\calt$ come from geometric configurations. So the task is to sort out those which cannot be obtained geometrically and then to find the minimum of $q(T)$'s for those which can.

\section{Criteria for the nonexistence of a geometric configuration}

In \cite{Szp2015} we introduced a number of criteria to deal with this problem. Here we begin with a useful modification of what was called a two pencils criterion. We keep this name and hope that this will not lead to any confusion.
\begin{lemma}[Two pencils criterion]\label{le: TP criterion}
Let $\call=\{L_1,\ldots,L_d\}$ be a configuration of lines in the
projective plane $\P^2(\K)$, with the singular set
$\{P_1,\ldots,P_s\}$, with $s\geq 2$. Let $m_1,\ldots,m_s$ be the
multiplicities of points $P_1,\ldots,P_s$ respectively. Without loss
of generality we can assume that
\[
m_1\geq m_2\geq\ldots\geq m_s.
\]
Then either
\begin{equation}\label{TPPv1}
m_1 m_2 + 2 \leq s,
\end{equation}
or, if \eqref{TPPv1} does not hold,
$$
(m_1-1)(m_2-1)+a\leq s,
$$
where $a$ is equal to the minimal number of singular points lying on
a line passing through $P_1$ and $P_2$. The number $a$ can be easily
computed combinatorially.
\end{lemma}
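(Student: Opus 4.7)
The plan is to produce lower bounds on $s$ by counting singular points forced by the two pencils of $\call$-lines through $P_1$ and through $P_2$; the argument splits according to whether the unique line $\ell_0$ joining $P_1$ to $P_2$ is itself a member of $\call$.

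I would first handle the case $\ell_0 \notin \call$. In this case the $m_1$ lines of $\call$ through $P_1$ and the $m_2$ lines of $\call$ through $P_2$ form two disjoint collections of lines of $\call$. Any pair $(\ell,\ell')$ with $P_1\in\ell$ and $P_2\in\ell'$ meets at a singular point of $\call$, automatically different from $P_1$ and $P_2$ because $\ell\not\ni P_2$ and $\ell'\not\ni P_1$. Distinct pairs produce distinct intersection points: a collision would force two distinct lines of one pencil to meet at a point other than the pencil's base, which is impossible in the projective plane. Adding $P_1$ and $P_2$ themselves yields $s\ge m_1m_2+2$, which is exactly \eqref{TPPv1}.

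Next I would assume \eqref{TPPv1} fails; the previous case is then impossible, so $\ell_0\in\call$. Removing $\ell_0$ from each pencil leaves $m_1-1$ lines through $P_1$ avoiding $P_2$ and $m_2-1$ lines through $P_2$ avoiding $P_1$. The same bookkeeping applied to these reduced pencils produces $(m_1-1)(m_2-1)$ pairwise distinct singular points, each of which lies off $\ell_0$: if such a point lay on $\ell_0$, then one of the lines of the reduced pencil through $P_1$, say, would meet $\ell_0$ at a point other than $P_1$ and hence coincide with $\ell_0$, contradicting its selection. Adjoining the singular points of $\call$ actually lying on $\ell_0$, whose number is at least $a$ (and at least $2$ since $P_1,P_2\in\ell_0$), I obtain $s\ge (m_1-1)(m_2-1)+a$, as required.

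The proof is essentially bookkeeping; the one careful step is ensuring that the pencil-intersection points do not coincide, which rests on the elementary fact that two distinct lines through a common point meet only at that point. The final assertion that $a$ is combinatorially computable amounts to the observation that, once the multiplicity data $(t_2,\ldots,t_d)$ and the choice of $P_1,P_2$ among the points of highest multiplicities are fixed, the number of singular points forced to lie on $\ell_0$ is determined by the incidence counts alone, without any reference to a specific realization over a field.
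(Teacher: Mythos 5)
Your proposal is correct and follows essentially the same route as the paper: case split on whether the line joining $P_1$ and $P_2$ belongs to $\call$, count the pairwise intersections of the two pencils (full pencils in the first case, pencils minus the common line in the second), and add $P_1,P_2$ respectively the at least $a$ singular points on the common line. You merely spell out in more detail the distinctness of the pencil-intersection points, which the paper leaves implicit.
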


\begin{proof}
Assume that points $P_1$ and $P_2$ do not lie on a configuration lines, then the lines from two
pencils (lines through $P_1$, resp. $P_2$) meet in $m_1 m_2$ points. Together with $P_1$ and $P_2$ we get \eqref{TPPv1}.

If \eqref{TPPv1} does not hold, points $P_1$ and $P_2$ lie on a configuration line. Lines from these two pencils (apart from the common line) meet in $(m_1-1)(m_2-1)$ points. Now we add the number of points on the common line, which is at least $a$.
\end{proof}
The following Example illustrates how the two pencil criterion is
applied
\begin{example}
   The following data: $d=10$, $t_3=7$ and $t_4=4$ is a solution of \eqref{eq: combinatorial}. 
   Then $m_1=m_2=4$ and $s=11$.
   Since $4 \cdot 4 + 2 > 11$, we pass to the second inequality. Now
   $a=3$, since the line through $P_1$ and $P_2$ meets with six other
   lines at these two points, hence there must be another point on this
   line (and one point of multiplicity 4 suffices). The inequality 
   $3\cdot 3 + 3 > 11$ shows that there is no geometrical configuration satisfying above data.
\end{example}
   The next idea is to doubly count the incidences. First we need to introduce some notation. To a configuration line $L$ we attach its \emph{type vector}
\[
\nu(L)=(\nu_2(L),\nu_3(L),\ldots,\nu_d(L)),
\]
   where $\nu_k(L)$ denotes the number of points of multiplicity $k$ on $L$. For example the line $L$ in Figure \ref{fig: type of line} has type $\nu(L)=(1,2,0,0,0)$.

\begin{figure}[H]
\centering
\begin{tikzpicture}[line cap=round,line join=round,>=triangle 45,x=1.0cm,y=1.0cm,scale=0.5]
\clip(-4.3,-1.92) rectangle (7.0600000000000005,6.3);
\draw [domain=-4.3:7.0600000000000005,thick] plot(\x,{(--8.684800000000001--0.040000000000000036*\x)/3.4800000000000004});
\draw (5.3,3) node {$L$};
\draw [domain=-4.3:7.0600000000000005] plot(\x,{(--8.0032--1.1800000000000002*\x)/2.58});
\draw [domain=-4.3:7.0600000000000005] plot(\x,{(--4.752-1.32*\x)/2.64});
\draw [domain=-4.3:7.0600000000000005] plot(\x,{(--4.684800000000001-1.1400000000000001*\x)/0.9000000000000001});
\draw [domain=-4.3:7.0600000000000005] plot(\x,{(-0.7664--1.36*\x)/0.8400000000000001});
\draw [domain=-4.3:7.0600000000000005] plot(\x,{(--3.2696000000000005-2.5*\x)/0.06000000000000005});
\begin{scriptsize}
\draw [fill=black] (-1.36,2.48) circle (1.5pt);
\draw [fill=black] (2.12,2.52) circle (1.5pt);
\draw [fill=black] (1.22,3.66) circle (1.5pt);
\draw [fill=black] (1.28,1.16) circle (1.5pt);
\draw [fill=black] (3.455672068636797,4.682516682554813) circle (1.5pt);
\draw [fill=black] (4.441739130434785,-0.4208695652173919) circle (1.5pt);
\draw [fill=black] (1.2476006618863762,2.5099724214009926) circle (1.5pt);
\end{scriptsize}
\end{tikzpicture}
   \caption{}
   \label{fig: type of line}
   \end{figure}

Now, let $n_{\nu}(\call)$ be the number of lines in $\call$ with the type vector $\nu$. Then we have
 \begin{equation}\label{eq: solver}
   \left\{\begin{array}{ccc}
   &\sum\limits_{\nu} n_{\nu}(\call)=d\\
   &\sum\limits_{\nu=(\nu_2,\ldots,\nu_d)} n_{\nu}(\call)\cdot \nu_k=k\cdot t_k, \quad {\text{for $k=2,\ldots,d$.}}
   \end{array}\right.
 \end{equation}
The first equation simply counts all lines in a configuration. The others count all ``incidences'' --- a line passing through
a point of given multiplicity $k$ count as one incidence.

Let $T=\{t_2,\ldots,t_d\}$ be a set of integers satisfying
\eqref{eq: combinatorial} for a fixed $d$. To these numbers there is
the associated system of equations \eqref{eq: solver}. In this
system the symbols $n_{\nu}(\call)$ are unknown. Which type vectors
$\nu(L)$ can appear in the given configuration can be easily
determined in advance. Their number is quite restricted. If the
system \eqref{eq: solver} has no-negative solutions, then it follows
that the set $T$ cannot be realized geometrically.

However the set of equalities \eqref{eq: solver} is not always
sufficient for our purposes. Let us assume that in a configuration
there is a unique point of multiplicity $m$, that is $t_m=1$. Then
all lines passing through this point (that is all lines $L$ with
$\nu(L)=(\nu_2,\dots,\nu_{m-1},1,\nu_{m+1},\dots,\nu_d)$) belong to
the same pencil. Now for $k\neq m$ we count all points of
multiplicity $k$ on these lines, which obviously must be at most
$t_k$,
\begin{equation}\label{eqonep}
\sum\limits_{\nu=(\nu_2,\ldots,\nu_d), \nu_m=1} n_{\nu}(\call)\cdot \nu_k \leq t_k, \quad \text{for $k=2,\ldots,d$, $k\neq m$.}
\end{equation}
So the new and powerful criterion for nonexistence works as follows:
write down all equations \eqref{eq: solver} with the set of
inequalities \eqref{eqonep} for all $t_m=1$, then try to solve this
system of linear equations and inequalities in non-negative
integers. This is a problem, well-known as integer programming, and
there are many algorithms and software to deal with it.
\begin{example}\label{ex: d14}
   The following data: $d=14$, $t_3=7$, $t_4=10$ and $t_5=1$
   is a solution of \eqref{eq: combinatorial}. If it corresponds
   to a geometrical configuration then there are exactly four type vectors $\nu$, for
   which $n_\nu(\call)$ may be non-zero (a fixed line must meet with 13
   other in singular points using only multiplicities appearing in the
   configuration, hence we can easily write down all possibilities).
   These are $(0,5,1,0,\dots)$, $(0,2,3,0,\dots)$, $(0,3,1,1,\dots)$,
   $(0,0,3,1,\dots)$. Assume that we have $a$ (resp. $b$, $c$, $d$)
   lines in $\call$ with resp. types. We have the following system of
   equalities:
$$
\begin{cases}
a+b+c+d=14, \\
5a+2b+3c=21, \\
a+3b+c+3d=40, \\
c+d=5.
\end{cases}
$$
   There are two nonnegative integer solutions, namely $(a,b,c,d)\in \{(0,9,1,4),(1,8,0,5)\}$. 
   Observe however that $t_5=1$ allows us to use two additional inequalities
$$
\begin{cases}
3c \leq 7,\\
c+3d \leq 10.
\end{cases}
$$
   The last inequality gives a contradiction, hence the initial data in this example does
   not come from any geometrical configuration.
\end{example}

\subsection{A {\sc Singular} script}

We wrote a {\sc Singular} script, which, given a number of lines $d$
and a value $h:= q(T)$ for a geometric configuration, works as
follows:
\begin{itemize}
\item
it enumerates first all possible arrays of integers
$T=(t_2,\dots,t_d)$ satisfying \eqref{eq: combinatorial},
\item
for each such an array it computes the quotient $q(T)$ as in
\eqref{eqqT},
\item
for those quotients, which satisfy $q(T) < h$ it checks whether the
two pencils criterion works,
\item
if this is not the case, then the script produces an input for
linear programming problem given by \eqref{eq: solver} and
\eqref{eqonep}, then it uses a glpsol software to solve it,
\item
the results are reported; if for all $T$ with $q(T)<h$ one of the
two above criteria verifies the non-existence of a geometric
configuration with data $T$, then $h$ is a lower bound for $H(d)$.
Otherwise the test fails and we do not know $H(d)$
\end{itemize}
   The script is revoked by the command
   \begin{center}
   \begin{verbatim}
      check(number_of_lines, tested_bound, "output_file");
   \end{verbatim}
   \end{center}
   For example check(10,-29/12,"result") checks the validity of the number $H(10)$
   provided in Theorem \ref{thm values}.

\section{Proofs of the lower and upper bounds}
In this section we prove Theorem \ref{thm: lower bnd} and Theorem \ref{thm: upper bnd}. We begin with the lower bound.
\proofof{Theorem \ref{thm: lower bnd}}
For $d\geq 6$ and $s\geq 1$ we consider the following function
$$f(d,s)= \frac d s -\frac 1 2 -\frac 1 2 \sqrt{1+\frac{4d^2-4d}{s}}.$$ For a fixed field $\K$ and positive integer $d$, let
$\call$ be a configuration of $d$ lines with altogether $s$ singular
points. Then we have the following\\
\textbf{Claim}
   \begin{equation}\label{ineq:lower}
    H(\K, \call) \geq f(d,s).
   \end{equation}
   Taking this for granted, Theorem \ref{thm: lower bnd} follows easily. Indeed, first of
   all the right hand side in (\ref{ineq:lower}) does not depend on
   $\K$, so that
   $$H(d)\geq \min\limits_{s\geq 1} f(d,s).$$
If $\call$ is a pencil, i.e. $s=1$, then $H(\K,\call)=f(d,1)=0$.
Otherwise by the celebrated de Bruijn-Erd\"os Theorem \cite{BruErd48}
it must be $s\geq d$. Elementary calculus shows that for a fixed $d$
the function $f(d,s)$ is strictly increasing for $s \geq d$. Hence
finally
\[
H(d)\geq \min\limits_{s\geq d} f(d,s)= -\frac 1 2 \sqrt{4d-3}+\frac
1 2.
\]
The extra assertion of Theorem \ref{thm: lower bnd} will be proved at the end of this section.

Now we turn back to the Claim.

Using \eqref{eq: simply H} we have
\[
H(\K, \call)=\frac d s -\frac{\sum_{k=1}^sm_{\call}(P_k)}{s},
\]
so it suffices to show that
\begin{equation}\label{ieq: M}
M:=\frac{\sum_{k=1}^sm_{\call}(P_k)}{s}\leq \frac 1 2+\frac 1 2 \sqrt{1+\frac{4d^2-4d}{s}}.
\end{equation}
The idea now is to apply Jensen's inequality \eqref{ieq: Jensen} to \eqref{eq: combinatorial}.

Recall that for a convex function $\varphi(x)$ and non-negative numbers $\lambda_1,\ldots,\lambda_s$ such that $\sum_{i=1}^s\lambda_i=1$ there is
\begin{equation}\label{ieq: Jensen}
\sum_{i=1}^s\lambda_i\varphi(x_i)\geq \varphi\left(\sum_{i=1}^s\lambda_ix_i\right).
\end{equation}
The function $\varphi(x)=x(x-1)$ satisfies the assumptions. Hence, from \eqref{eq: combinatorial} we obtain with $\lambda_1= \ldots =\lambda_s=\frac 1 s$
   \begin{equation}
      \renewcommand{\arraystretch}{1.2}
   \begin{array}{ccl}
       \frac 1 s d(d-1) & = & \frac 1 s \sum_{k=1}^s m_{\call}(P_k)(m_{\call}(P_k)-1)\geq\\
       & \geq & (\frac 1 s \sum_{k=1}^s m_{\call}(P_k))(\frac 1 s \sum_{k=1}^s m_{\call}(P_k)-1)  \\
       & = & M(M-1).
   \end{array}
   \end{equation}
It is elementary to check that this implies \eqref{ieq: M} and we
are done.
\endproof
Now we prove the upper bound.
\proofof{Theorem \ref{thm: upper bnd}}
This bound is obtained in a rather naive way. Let $r=r(d)$. We consider the projective plane $\P^2(\F_{r})$ as embedded in the projective plane defined over the algebraic closure $\overline{\F}_{r}$. Then $\call_1$ is the configuration of all $d_1=r^2+r+1$ lines coming from $\P^2(\F_{r})$. Then we take $d_2=d-d_1$ \emph{general} lines in $\P^2(\overline{\F}_{r})$. These lines form another configuration $\call_2$. Since they are general, they intersect pairwise in $\binom{d_2}{2}$ distinct points and they intersect the lines in $\call_1$ in $d_2d_1$ distinct points. Thus for $\call=\call_1\cup\call_2$ we have
   \begin{equation}
   t_k(\call)=
   \left\{\begin{array}{lcc}
   \frac 1 2 d_2(d_2-1)+d_1d_2 &   & \text{for $k=2$}\\
   r^2+r+1 &   & \text{for $k=r+1$}\\
   0 &   & \text{otherwise}
   \end{array}\right..
   \end{equation}
The bound follows then computing the Harbourne constants and expressing everything in terms of $d$ and $r=r(d)$. Note that for $d=r^2+r+1$, we get $H(d)\leq -r(d)$.
\endproof
We conclude this section by showing the extra claim in Theorem \ref{thm: lower bnd}.
\begin{corollary}
Let $d=q^2+q+1$ with $q\in Q$. Then
\[
H(d)=-q
\]
and $H(d)$ is computed by the configuration of all lines in $\P^2(\F_q)$.
\end{corollary}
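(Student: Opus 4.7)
The plan is to show the matching upper and lower bound $H(d) = -q$ separately, using Theorem~\ref{thm: lower bnd} for the lower bound and the explicit finite-plane configuration for the upper bound.

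First I would substitute $d = q^2+q+1$ into the lower bound $H(d) \geq -\tfrac{1}{2}\sqrt{4d-3} + \tfrac{1}{2}$. The quantity under the square root becomes
\[
4d-3 = 4q^2 + 4q + 1 = (2q+1)^2,
\]
so the bound simplifies at once to $H(d) \geq -\tfrac{1}{2}(2q+1) + \tfrac{1}{2} = -q$. This is the only nontrivial algebraic manipulation and it is the point of choosing $d$ of this form.

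Next I would exhibit a configuration realising $-q$. Take $\call$ to be the set of all lines of $\P^2(\F_q)$, which has exactly $d = q^2+q+1$ lines. Classical facts about the finite projective plane of order $q$ give that the singular set consists of all $s = q^2+q+1$ points of $\P^2(\F_q)$, and through each point there pass exactly $q+1$ lines, i.e. $m_{\call}(P_k) = q+1$ for every $k$. Plugging into the simplified formula \eqref{eq: simply H},
\[
H(\F_q,\call) \;=\; \frac{d - \sum_{k=1}^{s} m_{\call}(P_k)}{s} \;=\; \frac{(q^2+q+1) - (q^2+q+1)(q+1)}{q^2+q+1} \;=\; 1 - (q+1) \;=\; -q.
\]
Hence $H(d) \leq H(\F_q,\call) = -q$. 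Combining with the lower bound proved above yields $H(d) = -q$, and the minimum is attained by the finite projective plane configuration, as asserted.

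There is no real obstacle here: once Theorem~\ref{thm: lower bnd} is available, the content is purely the observation that $4(q^2+q+1)-3$ is a perfect square and that the finite projective plane of order $q$ is a $(q+1)$-regular incidence structure on $q^2+q+1$ points and lines. The same computation is also implicit in the proof of Theorem~\ref{thm: upper bnd}, where the case $d_2 = 0$ gives exactly the configuration $\call_1$ used here.
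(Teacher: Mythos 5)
Your proposal is correct and follows essentially the paper's route: the lower bound comes from substituting $d=q^2+q+1$ into Theorem~\ref{thm: lower bnd} (where $4d-3=(2q+1)^2$), and the upper bound comes from the configuration of all lines of $\P^2(\F_q)$, which is exactly the $d_2=0$ case of the construction proving Theorem~\ref{thm: upper bnd}, as you note. The only difference is that the paper additionally discusses the equality case (equality in Jensen's inequality forces $s=d$, and then de Bruijn--Erd\H{o}s is invoked to pin down the extremal configuration), a refinement not needed for the statement as literally formulated, since your direct computation $H(\F_q,\call)=-q$ already shows the minimum is attained by that configuration.
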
\label{co: q}
\begin{proof}
   The inequality $H(d)\leq -q$ follows from Theorem \ref{thm: upper bnd}. 
   The lower bound $H(d)\geq -q$ follows in turn from Theorem \ref{thm: lower bnd}. 
   Note additionally that the proof of Theorem \ref{thm: lower bnd} shows then that there is the equality in \eqref{ieq: M}. 
   Hence $d=s$ in this case and we conclude again by the de Bruijn-Erd\"os Theorem. Note that whereas the configuration
   consists of all
   all lines in $\P^2(\F_q)$, it might be embedded in some larger projective plane.
\end{proof}

\section{Results justifying Conjecture \ref{con: Hconstants}}
We show first that there are infinitely many values of $d$ such that
$$H(d)\leq h(d)$$
holds. More precisely we have the following result
\begin{proposition}\label{prop: construction}
Let $d$ be a positive integer such that
$$(q-1)^2+(q-1)+1< d\leq q^2+q+1$$
with $q=q(d)$. Then
$$H(d)\leq h(d).$$
\end{proposition}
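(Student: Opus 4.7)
The plan is constructive. Setting $i := q^2+q+1-d$, which satisfies $0 \le i \le 2q-1$ under the hypothesis, I would exhibit an explicit configuration $\call$ of $d$ lines obtained by deleting $i$ lines from the complete configuration of all $q^2+q+1$ lines in $\P^2(\F_q)$, and verify that $H(\F_q, \call) = h(d)$, so that $H(d) \leq H(\F_q,\call) = h(d)$.

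The deletion recipe is piecewise. For $0 \le i \le q+1$, fix a point $P_1 \in \P^2(\F_q)$ and delete any $i$ of the $q+1$ lines through $P_1$. For $q+1 < i \le 2q-1$, fix two distinct points $P_1, P_2 \in \P^2(\F_q)$, delete all $q+1$ lines through $P_1$, and additionally delete $i-q-1$ of the $q$ lines through $P_2$ that are distinct from the line $P_1P_2$. (This is possible because $i-q-1 \le q-2 < q$.)

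The key observation driving the count is that for any $P \in \P^2(\F_q)$, the multiplicity of $P$ in $\call$ equals $q+1$ minus the number of deleted lines through $P$. A point-by-point tally, split according to the location of $P$ relative to $P_1$, $P_2$, and the line $P_1P_2$, shows that the multiplicity distribution of $\call$ matches the quantities $\varepsilon_1(i), \varepsilon_2(i), t_{q-1}(i), t_q(i), t_{q+1}(i)$ from the conjecture. Concretely: for $0 \le i \le q-1$, the point $P_1$ has multiplicity $m_1(i)=q+1-i$, the $iq$ points sitting on a deleted line but distinct from $P_1$ all have multiplicity $q$, and the remaining $q(q+1-i)$ points retain multiplicity $q+1$. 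The edge cases $i=q$ and $i=q+1$ are identical in counting, except that $P_1$ drops to multiplicity $1$ or $0$ and thereby ceases to be singular (giving $\varepsilon_1(i)=0$). For $q+1 < i \le 2q-1$, $P_1$ vanishes, the point $P_2$ acquires multiplicity $q+1-(i-q) = m_2(i)$, the $q-1$ points of $P_1P_2 \setminus \{P_1,P_2\}$ have multiplicity $q$ (only $P_1P_2$ is deleted through them), the $q(i-q-1)$ points off $P_1P_2$ lying on an extra deleted $P_2$-line have two deleted lines through them and thus multiplicity $q-1$, and the remaining $q(2q+1-i)$ points off $P_1P_2$ have a single deleted line (the unique $P_1$-line) through them and thus multiplicity $q$. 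Adding the $q-1$ points on $P_1P_2$ to the $q(2q+1-i)$ off-line points gives $t_q(i) = 2q^2-(i-2)q-1$, and the other counts line up with the conjectured formulas.

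Applying equation \eqref{eq: simply H} in every case yields $H(\F_q,\call) = h(d)$, which gives the desired bound. The main obstacle, such as it is, is purely bookkeeping: in the range $q+1 < i \le 2q-1$ the two pencils interact through the shared line $P_1P_2$, and one must ensure that points on $P_1P_2$, points off $P_1P_2$ lying on extra $P_2$-lines, and generic points are each counted in exactly one bucket so that the totals match the piecewise expressions for $t_{q-1}(i)$ and $t_q(i)$. Conceptually the argument is transparent: the combinatorics of $\P^2(\F_q)$ and the incidence structure of two pencils fully determine the multiplicity distribution, and the formulas defining $h(d)$ are essentially engineered to record exactly what these two natural deletion patterns produce.
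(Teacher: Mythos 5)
Your construction and point-by-point tally for $0\leq i\leq q+1$ and for $q+1<i\leq 2q-2$ are exactly the paper's argument (delete $i$ lines of one pencil, respectively the whole pencil through $P_1$ plus $i-(q+1)$ further lines through $P_2$, and match the multiplicity distribution against the data defining $h(d)$, then apply \eqref{eq: simply H}). In that range the proposal is correct and coincides with the paper's proof.

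There is, however, a genuine problem at the endpoint $i=2q-1$, which the hypothesis $(q-1)^2+(q-1)+1<d$ does include. The paper does \emph{not} continue the same deletion pattern there: it instead removes the line $P_1P_2$ together with $q-1$ further lines through $P_1$ and $q-1$ further lines through $P_2$, obtaining $q^2-q+2$ lines with $t_{q+1}=1$, $t_q=3(q-1)$, $t_{q-1}=(q-1)^2$, whose Harbourne constant is $-\frac{q^3-q^2+2q-2}{q^2+q-1}$ --- precisely the value singled out for $i=2q-1$ in Conjecture \ref{con: Hconstants} and appearing in Table \ref{tab: values} at $d=8,14,22$. Your continuation of the one-pencil-plus-partial-second-pencil deletion to $i=2q-1$ gives $t_{q+1}=0$, $t_q=3q-1$, $t_{q-1}=q^2-2q$ and a double point at $P_2$, hence only $H(\F_q,\call)=-\frac{q^2-q+2}{q+1}$. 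Cross-multiplying shows $\frac{q^2-q+2}{q+1}<\frac{q^3-q^2+2q-2}{q^2+q-1}$ for every $q\geq 4$ (equality only at $q=3$), so your bound is strictly weaker there: e.g.\ for $q=4$, $d=14$ you get $H(14)\leq -14/5=-2.8$, while the bound needed to match Table \ref{tab: values} and the Conjecture is $H(14)\leq -54/19\approx -2.842$. Since the Proposition is invoked in the proof of Theorem \ref{thm values} exactly to supply the upper bounds of the table (including $d=14$ and $d=22$), your reading of "$h(d)$" at $i=2q-1$ as the formal extension of the displayed formula does not deliver what the statement is used for; the case $i=2q-1$ requires the modified two-point construction (drop the joining line and $q-1$ lines from each pencil) or some other configuration achieving $-\frac{q^3-q^2+2q-2}{q^2+q-1}$.
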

\begin{proof}
The idea is to construct a configuration of lines with invariants indicated in Conjecture \ref{con: Hconstants}. To this end let
$i=q^2+q+1-d$. Let $\call_0$ be the configuration of all lines in $\P^2(\F_q)$. For $i\leq q+1$, we fix a point $P_1\in \P^2(\F_q)$
and remove exactly $i$ lines passing through the point $P_1$ getting the configuration $\call_1$. These lines intersect only at $P_1$, so that with every line we decrease the number of $(q+1)$-fold points by $q$ and increase the number of $q$-fold points by $q$ as well. The multiplicity $m_1$ of the point $P_1$ is $q+1-i$, whereas for $i=q$ and $i=q+1$, the point is no more a singular point
of the configuration. It is then elementary to check that
\begin{equation}\label{eq: Hbound1}
H(\F_q,\call_1)=h(d).
\end{equation}
For $q+1<i\leq 2q-2$, we remove all $q+1$ lines passing through $P_1$. This results in a configuration of $q^2$ lines with
$q^2+q$ points of multiplicity $q$. Then we remove the remaining $i-(q+1)$ lines from the pencil of lines passing through a second point $P_2$. Counting as above, we get \eqref{eq: Hbound1}.

Finally for $i=2q-1$, we fix two points $P_1$, $P_2$ and remove the line joining them, and $2(q-1)$ additional lines:
$q-1$ from a pencil through $P_1$ and $q-1$ from the other pencil. This results in a configuration of $q^2-q+2$ lines with
   \begin{equation}
      \renewcommand{\arraystretch}{1.2}
   \begin{array}{ccl}
       t_{q+1} & = & 1\\
       t_q & = & 3(q-1) \\
       t_{q-1} & = & (q-1)^2
   \end{array}
   \end{equation}
and all other $t_k=0$. This gives $H(d)\leq h(d)$ also in this case.
\end{proof}
Now we are in the position to prove Theorem \ref{thm values}.
\proofof{Theorem \ref{thm values}}
For all $d$ between $2$ and $31$, Proposition \ref{prop: construction} applies, so that $H(d)$ is at most equal to the numbers stated in Table \ref{tab: values}. Turning to the lower bound it turns out that our {\sc Singular} script works in all cases. This ends the proof.
\endproof
We pass now to $d$ in the range $32\leq d \leq 43$. For these values of $d$ we have $q=q(d)=7$, so consequently $d\leq (q-1)^2+(q-1)+1$. Hence the construction used in Proposition \ref{prop: construction} does not apply. It is well known that there is no projective plane with $43$ points (this would correspond to $q=6$), see \cite{BruRys49}.

Of course $t_7=43$ and all other $t_k=0$ is a solution to \eqref{eq: combinatorial} with $d=43$. Our program cannot exclude this configuration. It also cannot exclude any configuration resulting from this fake $\P^2(\F_6)$ configuration by removing lines. What it can is to exclude any lower values of Harbourne constants. So that we can conclude that for $32\leq d\leq 42$ it is
$$H(d)\geq h(d)$$
and
$$H(43)>-6.$$
It would be very interesting to modify our approach in a way opening access to configurations coming from fake projective planes. We hope to come back to this question in the next future.

\paragraph*{\emph{Acknowledgement.}}
   This paper was to large extend written while the last author visited
   the University of Freiburg. It is a pleasure to thank Stefan Kebekus
   for hospitality.
   We would like also to thank Tomasz Szemberg for helpful conversations.

\section{Appendix}
\begin{verbatim}
ring R=0,x,dp;
proc writelistH(list l) {
  string s="H-const: "+string(l[1])+"  conf: ";
  for (int k=2;k<=size(l);k++) {
    s=s+string(l[k])+" ";
  }
  return(s);
}
proc writelist(list l) {
  string s="";
  for (int k=2;k<=size(l);k++) {
    s=s+string(l[k])+" ";
  }
  return(s);
}
proc computeH(int n, list m) {
  number h=n*n;
  for (int i=2;i<=size(m);i++) {
    h=h-i*i*m[i];
  }
  int s=0;
  for (i=2;i<=size(m);i++) {
    s=s+m[i];
  }
  h=h/s;
  return(h);
}
proc twopencil(int n, list m) {
  int s=0;
  for (int i=2;i<=size(m);i++) {
    s=s+m[i];
  }
  string result="";
  if (s<n) {return("  few points works");}
  int r=2;
  int p=size(m);
  list tp;
  while (r>0) {
    if (m[p]>0) {
      tp[r]=p;
      m[p]=m[p]-1;
      r=r-1;
    } else {
      p=p-1;
    }
  }
  if ((tp[1]-1)*(tp[2]-1)+2>s) {result=result+ "  TP works";} else
  {
  int dp=2;
  int zp=n-(tp[1]-1)-(tp[2]-1)-1;
  while (zp>0) {
    if (m[p]>0) {
      zp=zp-(p-1);
      m[p]=m[p]-1;
      dp=dp+1;
    }
    else
    {
      p=p-1;
    }
  }
  if ((tp[1]-1)*(tp[2]-1)+dp>s) {result=result+"  TP(p) works";}
  }
  return(result);
}
proc writeconf(list l) {
  string s="   line conf: ";
  for (int k=2;k<=size(l);k++) {
    s=s+string(l[k])+" ";
  }
  return(s);
}
proc eqcritbyglp(int n, list m) {
  "checking "+writelist(m);
  "  conf for n="+string(n)+"...";
  list rm,nm,sm;
  for (int i=1;i<=size(m);i++) {rm[i]=0;}
  int p=0;
  int s;
  list cp;
  while (p<=size(m)) {
    rm[2]=rm[2]+1;
    p=2;
    while ((p<=size(m))&&(rm[p]>m[p])) {
      rm[p]=0;
      p=p+1;
      if (p<=size(m)) {rm[p]=rm[p]+1;} else {break;}
    }
    s=0;
    for (i=2;i<=size(m);i++) {
      s=s+rm[i]*(i-1);
    }
    if (s==n-1) {
      cp[size(cp)+1]=rm;
    }
  }
  int vrb=size(cp);
  if (vrb==0) {return("   CONF(0) works");}
  list eqs;
  list eqq;
  for (i=1;i<=size(cp);i++) {
    eqq[i]=1;
  }
  eqq[vrb+1]=n;
  eqs[1]=eqq;
  for (p=2;p<=size(m);p++) {
    if (m[p]>0) {
      for (i=1;i<=size(cp);i++) {
        rm=cp[i];
        eqq[i]=rm[p];
      }
      eqq[vrb+1]=p*m[p];
      eqs[size(eqs)+1]=eqq;
    }
  }
  int j;
  string name=":w test";
  write(name,"minimize value: a1");
  name=":a test";
  write(name,"subject to");
  string wr;
  for (i=1;i<=size(eqs);i++) {
    wr="e"+string(i)+": ";
    eqq=eqs[i];
    for (j=1;j<=vrb;j++) {
      if (j>1) {wr=wr+" + ";}
      wr=wr+string(eqq[j])+" a"+string(j);
    }
    wr=wr+" = "+string(eqq[vrb+1]);
    write(name,wr);
  }
  int mm,k,o;
  for (mm=2;mm<=size(m);mm++) {
    if (m[mm]==1) {
      for (i=2;i<=size(m);i++) {
        if ((i!=mm)&&(m[i]>0)) {
          wr="b"+string(mm)+"k"+string(i)+": ";
          o=0;
          for (k=1;k<=size(cp);k++) {
            rm=cp[k];
            if (rm[mm]==1) {
              if (o==0) {o=1;} else {wr=wr+" + ";}
              wr=wr+string(rm[i])+" a"+string(k);
            }
          }
          wr=wr+" <= "+string(m[i]);
          if (o==1) {write(name,wr);}
        }
      }
    }
  }
  write(name,"integer");
  for (j=1;j<=vrb;j++) {
    write(name," a"+string(j));
  }
  write(name,"end");
  int dummy=system("sh","glpsol --lp test -o solution");
  link solfile=":r solution";
  string sol=read(solfile);
  if (find(sol,"UNDEFINED",1)>10) {return("  SOLVER works");}
  return("");
}
proc throw(int n, list m) {
  string ii=writelistH(m);
  ii=ii+twopencil(n,m);
  if (find(ii,"works")==0) {ii=ii+eqcritbyglp(n,m);}
  return(ii);
}
proc scheck(int n, number bnd, string infofile, list m) {
  int ntp,ntpp,nsolver;
  int ok=1;
  infofile=":a "+infofile;
  write(infofile,"Input data: "+string(n)+" lines, bound for H-constant: 
     "+string(bnd)+".");
  write(infofile,"Configurations to exclude:");
  list v;
  string info;
  int na;
  for (int i=2;i<=n;i++) {
    v[i]=i*(i-1) div 2;
  }
  int sum=n*(n-1) div 2;
  list b;
  for (i=2;i<=n;i++) {
    b[i]=sum div v[i];
  }
  int p;
  int mm;
  while (p<n) {
    m[3]=m[3]+1;
    p=3;
    while ((m[p]>b[p])||((p>n+1-mm)&&(p<mm))) {
      m[p]=0;
      p=p+1;
      m[p]=m[p]+1;
      if (p>mm) {mm=p;}
    }
    if (p==n) {break;}
    m[2]=sum;
    for (i=3;i<=n-1;i++) {
      m[2]=m[2]-v[i]*m[i];
    }
    if ((m[2]>=0)&&(m[n]==0)) {
      na=na+1;
      if ((na mod 10000)==0) {string(na)+" already checked...";writelist(m);}
      m[1]=computeH(n,m);
      if (m[1]<bnd) {
        info=throw(n,m);
        info;
        if (find(info,"works")==0) {ok=0;}
        write(infofile,info);
        if (find(info,"TP ")>0) {ntp++;}
        if (find(info,"TP(p)")>0) {ntpp++;}
        if (find(info,"SOLVER")>0) {nsolver++;}
      }
    } else {
      p=3;
      while (m[p]==0) {p=p+1;}
      m[p]=b[p];
    }
  }
  if (ok==1) {
    "All configurations have been excluded.";
    write(infofile,"All configurations have been excluded.");
  }
  info="TP used "+string(ntp)+" times, TP(p) used "+string(ntpp)+" times, 
     SOLVER used "+string(nsolver)+" times.";
  info;
  write(infofile,info);
}
proc check(int n, number bnd, string infofile) {
  list m;
  for (int i=1;i<=n;i++) {
    m[i]=0;
  }
  scheck(n,bnd,infofile,m);
}
proc contcheck(int n, number bnd, string infofile, list sm) {
  list m;
  for (int i=1;i<=n;i++) {
    m[i]=0;
  }
  for (i=1;i<=size(sm);i++) {
    m[i]=sm[i];
  }
  scheck(n,bnd,infofile,m);
}
\end{verbatim}
%*****************************************************************************

%***************************************************************************** % Addresses

\bigskip \small
\bigskip
   Marcin Dumnicki,
   Jagiellonian University, Institute of Mathematics, {\L}ojasiewicza 6, PL-30-348 Krak\'ow, Poland

\nopagebreak
   \textit{E-mail address:} \texttt{Marcin.Dumnicki@im.uj.edu.pl}

\bigskip
   Daniel Harrer,
   Albert-Ludwigs-Universit\"at Freiburg,
   Mathematisches Institut, D-79104 Freiburg, Germany

\nopagebreak

   \textit{E-mail address:} \texttt{daniel.harrer@math.uni-freiburg.de}

\bigskip
   Justyna Szpond,
   Pedagogical University of Cracow, Institute of Mathematics,,
   Podchor\c a\.zych 2,
   PL-30-084 Krak\'ow, Poland

\nopagebreak

   \textit{E-mail address:} \texttt{szpond@up.krakow.pl}

\bigskip

   Justyna Szpond current address:
   Albert-Ludwigs-Universit\"at Freiburg,
   Mathematisches Institut, D-79104 Freiburg, Germany
%*****************************************************************************

\end{document}